\newcommand{\Z}{\mathbb{Z}}
\newcommand{\N}{\mathbb{N}}
\DeclareMathOperator{\skel}{skel}
\newtheorem{theorem}{Theorem}
\newtheorem{lemma}[theorem]{Lemma}
\newtheorem*{unnumtheorem}{Theorem}
\newtheorem*{lovaszlemma}{Lov\'asz Local Lemma}
\newtheorem{proposition}[theorem]{Proposition}
\theoremstyle{definition}
\newtheorem*{conjecture}{Conjecture}
\newtheorem{definition}[theorem]{Definition}
\title{A lower bound on the number of homotopy types of simplicial complexes on $n$ vertices} 
\author{Andrew Newman \thanks{Some of this work was conducted at Ohio State University and funded in part by the National Science Foundation grants NSF-DMS \#1547357 and \#60041693, and the rest was conducted at Technische Universit\"at Berlin funded by Deutsche Forschungsgemeinshaft (DFG, German Research Foundation) Graduiertenkolleg ``Facets of Complexity" (GRK 2434)}}
\date{\today}
\begin{document}
\maketitle
\abstract
For $n \in \N$, let $h(n)$ denote the number of simplicial complexes on $n$ vertices up to homotopy equivalence. Here we prove that $h(n) \geq 2^{2^{0.02n}}$ when $n$ is large enough. Together with the trivial upper bound of $2^{2^n}$ on the number of labeled simplicial complexes on $n$ vertices this proves a conjecture of Kalai that $h(n)$ is doubly exponential in $n$. \\

\section{Introduction}
In \cite{Newman}, the present author defines $T_d(G)$ for $d \geq 2$ and $G$ a finite abelian group to be the smallest integer $n$ so that there exists a $d$-dimensional simplicial complex $X$ on $n$ vertices with the torsion part of $H_{d - 1}(X)$ (denoted $H_{d - 1}(X)_T$) isomorphic to $G$. The main result in \cite{Newman} is the upper bound in the following theorem. The lower bound was previously known due to \cite{Kalai}.
\begin{unnumtheorem}[Theorem 1.1 from \cite{Newman}]For every fixed $d \geq 2$, there exist constants $c_d$ and $C_d$ so that for any finite abelian group $G$, $$c_d(\log |G|)^{1/d} \leq T_d(G) \leq C_d(\log |G|)^{1/d}.$$
\end{unnumtheorem}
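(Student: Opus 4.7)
The theorem combines a lower and an upper bound on $T_d(G)$; the two are proved by very different methods and I would treat them separately.

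\textbf{Lower bound (Kalai, elementary).} Let $X$ be a $d$-complex on $n$ vertices and let $\partial_d$ denote its top boundary matrix, with rows indexed by $(d-1)$-faces and columns by $d$-faces. Then $\partial_d$ is an integer matrix with entries in $\{-1,0,1\}$, at most $\binom{n}{d}$ rows, and exactly $d+1$ nonzero entries per column. Writing $r$ for the rank of $\partial_d$, the Smith normal form gives $H_{d-1}(X)_T \cong \bigoplus_i \Z/d_i\Z$ where $d_1 d_2 \cdots d_r$ equals the gcd of all $r \times r$ minors of $\partial_d$ and in particular divides any single such minor. By Hadamard's inequality, the absolute value of any $r \times r$ minor is at most the product of the Euclidean norms of the chosen columns, each of which is $\sqrt{d+1}$, giving $|G| \leq (d+1)^{r/2} \leq (d+1)^{\binom{n}{d}/2} = 2^{O_d(n^d)}$. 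This rearranges to $n \geq c_d(\log|G|)^{1/d}$.

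\textbf{Upper bound (the main work).} Here one must construct, for each finite abelian group $G$, a $d$-complex $X$ on $n = O_d\bigl((\log|G|)^{1/d}\bigr)$ vertices whose top boundary matrix has Smith normal form realizing $G$ as its torsion summand. A natural first reduction is to write $G$ as a direct sum of cyclic factors $\bigoplus_i \Z/k_i\Z$ and try to realize each summand; however the standard lens-like realization of $\Z/k\Z$ already uses $\Theta(\log k)$ vertices, which is a factor of $(\log k)^{1-1/d}$ too many. To match the lower bound one must essentially saturate Hadamard's inequality, packing the full group $G$ into a single sparse $\pm 1$ boundary matrix of size roughly $\binom{n}{d} \times \binom{n}{d+1}$ with prescribed elementary divisors.

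\textbf{Main obstacle and strategy.} I would attempt a two-stage construction. In the first stage, use a random or carefully structured $d$-complex near the Linial--Meshulam threshold to produce a boundary matrix whose torsion cokernel contains (or surjects onto) $G$; the key input here is anti-concentration / Littlewood--Offord-type control on the distribution of minors and elementary divisors of a random sparse $\pm 1$ matrix, ensuring that any prescribed group of the correct total size is hit with positive probability. In the second stage, perform local surgery --- attaching a bounded number of further $(d-1)$- or $d$-cells, or identifying vertices --- to excise extraneous summands and obtain $H_{d-1}(X)_T$ exactly equal to $G$, while keeping the total vertex count within the target. The main technical obstacle is the first stage: controlling the arithmetic statistics of the elementary divisors of a random simplicial boundary matrix tightly enough to realize an arbitrary prescribed abelian group is delicate and is where essentially all of the work of the proof must go.
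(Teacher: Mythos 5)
Your lower bound argument is fine: the Smith normal form/Hadamard computation you give is essentially Kalai's proof, and it correctly yields $|G| \le (d+1)^{\binom{n}{d}/2}$, hence $T_d(G) \ge c_d(\log|G|)^{1/d}$. The problem is the upper bound, which is where the entire content of the theorem lies, and there what you offer is a research plan rather than a proof, with the decisive step explicitly left open. The step that would fail is the first stage: asking that a random sparse boundary matrix (a Linial--Meshulam-type complex) have cokernel containing or surjecting onto an \emph{arbitrary prescribed} group $G$ of order $\exp(\Theta(n^d))$ "with positive probability" is not something Littlewood--Offord/anti-concentration technology delivers. The known and conjectured results in that direction are distributional (Cohen--Lenstra-type statements about the \emph{typical} torsion group), and under exactly those heuristics the probability of hitting one specific group of that size is doubly exponentially small, so no positive-probability or union-bound argument over the single sample can realize every $G$. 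The second stage is also not innocuous: excising unwanted torsion summands by attaching a bounded number of cells or identifying vertices cannot in general be done without disturbing the rest of the cokernel, and you give no mechanism for it.

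The actual route (Theorem 1.1 of \cite{Newman}, summarized in Section 2 of this paper) uses randomness in a completely different place. One first builds a \emph{deterministic} complex realizing $G$: reduce to cyclic factors, present $\Z/m\Z$ by repeated squares $2a_1=a_2,\dots$, and realize this presentation by an explicit ``sphere-and-telescope'' complex $X$ on $O_d(\log|G|)$ vertices whose $1$-skeleton has bounded maximum degree. This $X$ has the right torsion but far too many vertices. The probabilistic step is then a coloring argument: using the Lov\'asz Local Lemma one finds a proper vertex coloring with $O_d((\log|G|)^{1/d})$ colors such that no two $(d-1)$-faces receive the same pattern (multiset of colors); the quotient ``pattern complex'' $(X,c)$ then has the same codimension-one torsion and only as many vertices as there are colors, giving $T_d(G) \le C_d(\log|G|)^{1/d}$. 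So randomness is used only to compress a correct explicit construction, never to produce the prescribed torsion itself --- which is precisely the part your plan leaves unproved.
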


While the research direction leading to this result was motivated by experiments conducted in \cite{KLNP} examining the torison burst in random complexes, the result also says something about the number of homotopy types of simplicial complexes on $n$ vertices. Indeed, another formulation of this result is that for every $d \geq 2$, there exists a constant $C_d$ so that for every $n$, \emph{every} abelian group $G$ of size at most $\exp(n^d/C_d^d)$ is realizable as the torsion subgroup of the $(d - 1)$st homology group for some simplicial complex on $n$ vertices. If we let $h(n, d)$ for $n, d \in \N$ denote the number of $d$-dimensional simplicial complexes on $n$ vertices up to homotopy type, the bound on $T_d(G)$ implies that $h(n, d) \geq \exp(n^d/C_d^d)$. Therefore if $h(n)$ denotes the number of homotopy types of simplicial complexes on $n$ vertices without restriction on the dimension, then we have that $\log h(n)$ grows faster than any polynomial in $n$. Thus it becomes natural to ask how much faster.

Of course there is an obvious upper bound on $h(n)$. In particular, $h(n) \leq 2^{2^n}$ since this bounds the number of labeled simplicial complexes on $n$ vertices. An improved upper bound comes from the $n$th Dedekind number $M(n)$ which counts exactly the number of labeled simplicial complexes on $n$ vertices and \cite{KM} shows that $\log M(n) \leq (1 + o(1)) \binom{n}{\lfloor n/2 \rfloor}$.

The question of the number of homotopy types of simplicial complexes on $n$ vertices appears on MathOverflow as a question of Vidit Nanda \cite{Nanda}, where Gil Kalai conjectures \cite{KalaiMO} that the number of homotopy types of simplicial complexes on $n$ vertices is doubly exponential in $n$. Here we modify the argument from \cite{Newman} to give the following lower bound on $h(n)$ to show that this is the case.
\begin{theorem}\label{thm:main}
For $n$ large enough, $h(n) \geq 2^{2^{0.02n}}$.
\end{theorem}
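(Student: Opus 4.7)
The plan is to push the argument from \cite{Newman} into the regime where the dimension $d$ of the simplicial complex is allowed to scale linearly with the number of vertices $n$. Newman's theorem already yields $h(n,d) \geq \exp(n^d / C_d^d)$ by counting the cyclic groups $\Z/m$ with $m \leq \exp(n^d/C_d^d)$: each is realizable as $(d-1)$-torsion of some $d$-complex on $n$ vertices, and distinct cyclic orders give non-isomorphic homology groups, hence distinct homotopy types. For any fixed $d$ this is only singly exponential in $n$, but if $d$ is allowed to grow linearly in $n$ while $C_d$ is kept under control, the quantity $(n/C_d)^d$ itself becomes singly exponential in $n$, and $\exp((n/C_d)^d)$ enters the doubly exponential regime demanded by Theorem~\ref{thm:main}.

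The heart of the modification is therefore a quantitative bound of the form $C_d \leq Kd$ for a universal constant $K$, obtained by reopening the construction in \cite{Newman} and carrying the dependence on the dimension through every estimate. (Such linear growth is best possible, since every $d$-complex has at least $d+1$ vertices, forcing $C_d \geq d+1$.) Granted $C_d \leq Kd$, the function $d \mapsto (n/(Kd))^d$ is maximized at $d \approx n/(Ke)$ with value $e^{n/(Ke)}$. Applying Newman's theorem with this choice of $d$ then realizes every cyclic group of order at most $\exp(e^{n/(Ke)})$ as $H_{d-1}$-torsion, yielding at least that many pairwise distinct homotopy types on $n$ vertices. A short calculation shows that if $K$ is at most a moderate absolute constant (for instance $K \leq 26$ is more than enough), this count exceeds $2^{2^{0.02 n}}$ for $n$ large, establishing the theorem.

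The main obstacle is the quantitative bound $C_d = O(d)$ referenced above. The theorem in \cite{Newman} is stated only for each \emph{fixed} $d$, and the construction underlying it (which grows out of the torsion burst experiments in \cite{KLNP}) is likely probabilistic, so extracting a bound linear in $d$ \emph{uniformly} in $d$ amounts to revisiting the original proof and redoing each probabilistic estimate with careful attention to how it scales with the dimension. Once such a uniform quantitative realization bound is in place, the remaining ingredients---the optimal choice of $d$ and the counting of realized cyclic groups---are essentially formal.
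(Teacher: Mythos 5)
Your high-level strategy---scale the dimension $d$ linearly with $n$ and extract a quantitative bound $T_d(G)\leq Kd(\log|G|)^{1/d}$---is the right intuition, and your calculus (maximizing $(n/(Kd))^d$ at $d\approx n/(Ke)$) and the back-of-envelope requirement $K\lesssim 26$ are both correct. However, there is a genuine gap, and the paper in fact takes a different route precisely because this gap is hard to close.

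The gap: your argument rests entirely on obtaining the uniform bound $C_d\leq Kd$ with $K\leq 26$, and you leave this as a deferred bookkeeping exercise (``reopening the construction and carrying the dependence on $d$ through every estimate''). But the paper explicitly warns that tracing the constants through Newman's original argument gives only $C_d=\Theta(d^7)$, and that an optimized version of the \emph{general} sphere-and-telescope construction (done in the author's thesis) achieves only $C=50$. Plugging $K=50$ into your cyclic-group count gives $\log_2\log_2 h(n)\approx \frac{\log_2 e}{50e}\,n \approx 0.011\,n$, which falls short of $0.02n$. So the constant you need is out of reach by the general machinery you propose to invoke, and producing it would require new work not sketched in your proposal.

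The paper sidesteps this in two coupled ways. First, it restricts to abelian $2$-groups, for which the initial construction needs only the ``telescope'' piece (a chain of $P(d)$'s) and not the ``punctured sphere''; this yields a much cleaner effective constant (roughly $12.5d$ with $\log_2$) via Lemma~\ref{initial} and a simple three-block proper coloring. Second---and this is the part your proposal misses entirely---the counting is done not over cyclic groups of varying orders but over \emph{non-isomorphic abelian $2$-groups of a single fixed order} $2^{2^d}$, whose number is $\pi(2^d)$ (the partition function), which by Hardy--Ramanujan is $2^{\Theta(2^{d/2})}$. This partition-function counting is essential once one has restricted to $2$-groups: cyclic $2$-groups up to order $2^{2^d}$ number only $2^d$, which is merely singly exponential in $n$. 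Your proposal's cyclic-group count is incompatible with the $2$-group restriction that makes the constant manageable, and without that restriction the constant is too big. So the two choices in the paper---$2$-groups and partition-function counting---are not independent conveniences but a coupled fix for exactly the obstruction you name but do not resolve.
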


We point out that the approach of examining which torsion groups can appear in homology is better than the best lower bound one could hope for by counting, say the possible sequence of Betti numbers.  Indeed if $X$ is a simplicial complex on $n$ vertices then for all $0 \leq i < n$, $$\beta_i(X) \leq \binom{n}{i + 1} \leq 2^n,$$ and $\beta_i = 0$ for $i \geq n $. Thus the number of possible sequences of Betti numbers for a simplicial complex on $n$ vertices is upper bounded by $2^{n^2}$.

Unfortunatey, we cannot directly apply the main result of \cite{Newman}, as the upper bound constant $C_d$ is too large, directly from the proof of the result itself, it is exponential in $d$. Indeed even going through the argument in \cite{Newman} more carefully still gives a constant $C_d$ that is $\Theta(d^7)$ and here we need a constant $C_d$ which grows linearly in $d$. Such a bound can be obtained in general, however this requires going though the entire argument from \cite{Newman} and improving the constants in several place in ways that are not particularly interesting in general. To more efficiently prove the main result here, we restrict to a special case. Namely, we consider abelian 2-groups rather than general finite abelian groups and prove the following.

\begin{theorem}\label{thm:2torsionTheorem}
For $d$ large enough and $G$ an abelian group of order $2^{2^d}$ there exists a simplicial complex $X$ on $25d$ vertices of dimension $d$ with $H_{d - 1}(X)_T \cong G$.
\end{theorem}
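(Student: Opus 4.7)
The strategy is to revisit the argument of Theorem~1.1 from \cite{Newman} in the special case where $G$ is an abelian $2$-group, and to exploit the single-prime setting to drive the constant down from the $\Theta(d^7)$ obtainable in general to a linear-in-$d$ bound. First I would fix a decomposition $G \cong \bigoplus_{i=1}^{r} \Z/2^{k_i}$ with $\sum_{i=1}^{r} k_i = 2^d$; the task then reduces to realizing this decomposition as the torsion subgroup of $H_{d-1}(X)$ for some $d$-complex on $n = 25d$ vertices.

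For the base complex I would start with (a subcomplex of) the full $(d-1)$-skeleton of the simplex on $n$ vertices, whose cycle group $Z_{d-1}$ is free abelian of rank $\binom{n-1}{d}$, itself exponential in $d$. Adding $d$-faces imposes relations, and the combinatorial task becomes selecting a set $\Sigma$ of $d$-faces so that $\partial_d \Sigma \subseteq Z_{d-1}$ has elementary divisor structure matching $G$ on the torsion part (the free part of $H_{d-1}$ being allowed to be arbitrary).

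To realize each summand, I would build ``$2$-torsion gadgets'' adapted from \cite{Newman}: for each $\Z/2^{k_i}$, assemble a small collection of $d$-faces whose boundary realizes a relation of the form $2^{k_i} z_i \in \mathrm{im}(\partial_d)$ for some primitive $(d-1)$-cycle $z_i$, with no smaller multiple of $z_i$ lying in $\mathrm{im}(\partial_d)$ along that summand (analogous to a short chain triangulating a Moore space $M(\Z/2^{k_i}, d-1)$). Working with a single prime $p=2$ means each gadget needs on the order of $k_i$ $d$-faces; because $\sum_i k_i = 2^d$, the total number of $d$-faces used remains exponentially far below the $\binom{n}{d+1}$ available, and this slack is what permits many gadgets to share the same vertex support without running out of room.

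The main obstacle is the combination step: guaranteeing that the assembled complex $X$ has torsion subgroup exactly $G$, rather than some quotient, nontrivial extension, or interference between cyclic summands. I would handle this by the probabilistic scheme of \cite{Newman}: randomize the vertex supports of the gadgets within the base complex and apply a union bound over ``bad'' Smith normal form outcomes of the resulting boundary matrix. Restricting to the single prime $p=2$ drastically shrinks the set of bad events relative to the full multi-prime analysis, and this is precisely where the improvement from $C_d \approx d^7$ down to $C_d = 25$ should come from. The tightness of the constant suggests essentially no slack, so each of the base-complex choice, the gadget size, and the random placement has to be essentially optimal, which is what makes this the delicate part of the argument.
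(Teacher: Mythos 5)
Your proposal differs from the paper's proof in its basic architecture, and the difference is not cosmetic: the paper does not build torsion gadgets directly inside the simplex on $25d$ vertices and then randomize their placement. Instead it proceeds in two stages. First, a deterministic "telescope" complex $X$ is built on roughly $2(d+1)\log_2|G| = 2(d+1)2^d$ vertices (far more than $25d$) by chaining together $t$ copies of a punctured-projective-space gadget $P(d)$, a $d$-complex on $2(d+1)$ vertices with $H_{d-1}(P(d)) \cong \langle a,b\mid 2a=b\rangle$; this realizes $\Z/2^t\Z$ with full control over the torsion, and direct sums handle general $2$-groups. Second, the vertex count is collapsed by finding, via the Lov\'asz Local Lemma, a proper vertex coloring $c'$ with at most $25d$ colors such that no two $(d-1)$-faces of $X$ receive the same multiset of colors; one then passes to the \emph{pattern complex} $(X,c')$, a quotient of $X$ whose vertex set is the color set, and a lemma from \cite{Newman} guarantees that this quotient preserves $H_{d-1}(\cdot)_T$ provided the coloring is injective on $(d-1)$-face patterns. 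This quotient mechanism is the central idea that your proposal is missing, and without it there is no clear route to $25d$ vertices.

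Your proposed combination step is where the argument would break down. You suggest placing gadgets with random vertex support inside the $(d-1)$-skeleton of $\Delta^{n-1}$ and applying a union bound over "bad Smith normal form outcomes." This is not a well-defined probabilistic event to bound: when two gadgets share vertices, their boundary chains interact in $Z_{d-1}$ in ways that are not local, and there is no clean description of a "bad outcome" that admits an exponentially small probability estimate. Indeed the paper's own probabilistic step is not a union bound at all but an application of the symmetric Lov\'asz Local Lemma, and it is applied to the coloring problem (avoiding pattern collisions between $(d-1)$-faces), not to the placement of gadgets or to any Smith normal form computation. The dependence on $p=2$ in the paper is also not where you locate it: restricting to $2$-groups simplifies the deterministic construction (one only needs the "telescope," not the "sphere" relator) and the first-round coloring, rather than shrinking a set of bad SNF events. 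In short, you have correctly identified the Moore-space-like building blocks and the need to control interference, but you are missing the pattern-complex quotient that is the engine of the vertex-count reduction, and the probabilistic tool you reach for is the wrong one for the job.
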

\begin{proof}[Proof of Theorem \ref{thm:main} from Theorem \ref{thm:2torsionTheorem}]
It is well known that for $m \in \N$ the number of nonisomorphic abelian groups of order $m$ is given by $\pi(e_1) \pi(e_2) \cdots \pi(e_k)$ where $m = p_1^{e_1}p_2^{e_2} \cdots p_k^{e_k}$ is the prime factorization of $m$ and $\pi$ is the partition counting function. Therefore, for any $d$, the number of abelian groups of order $2^{2^d}$ is given by $\pi(2^d)$. A result of Hardy and Ramanujan \cite{HardyRamanujan} gives the asymptotic rate of growth for $\pi(\cdot)$, and here gives that $\log_2(\pi(2^d))$ is asymptotic to $C 2^{d/2}$ where $C > 1$ is an explicit constant. Therefore for $d$ large enough there are at least $2^{2^{d/2}}$ abelian groups of order $2^{2^d}$ and each of them may appear a $H_{d - 1}(X)_T$ for some simplicial complex $X$ on $25d$ vertices. Thus for $n$ large enough, there are at least $2^{2^{n/50}}$, $(n/25)$-dimensional complexes on $n$ vertices which each have a unique group of order $2^{2^{n/25}}$ as the torsion part of the codimension 1 homology group. 
\end{proof}

Note that in the proof above and elsewhere below above we implicitly treat certain real numbers as integers; to simplify the notation we suppress floor and ceiling symbols.

\section{Construction of simplicial complexes with prescribed torsion in homology}
Here we briefly review the proof of the main result of \cite{Newman}. We will modify certain parts of this proof in the proof of Theorem 2, but the idea is the same. There are two steps to the proof in \cite{Newman}, for fixed $d$ the two steps are:
\begin{enumerate}
\item Show that for every $d \geq 2$ and every abelian group $G$ there exists a $d$-dimensional simplicial complex $X = X(G)$ on $O_d(\log |G|)$ vertices whose 1-skeleton has maximum degree $O_d(1)$ and $H_{d - 1}(X)_T \cong G$. This is accomplished via an explicit construction.

\item Given $G$ build the initial construction $X$ from Step 1. Using the probabilistic method, in particular the Lov\'{a}sz Local Lemma, find a proper coloring $c$ of the vertices of $X$ having at most $O_d(\sqrt[d]{|V(X)|})$ colors so that no two $(d - 1)$-dimensional faces of $X$ receive the exact same set of colors on their vertices. The ``final construction" denoted $(X, c)$ is a quotient of $X$ determined by the coloring $c$ and proves the upper bound in the theorem. 
\end{enumerate}

The initial construction here will remain the same, though we restrict to the case of abelian 2-groups. In general the initial construction is call a sphere-and-telescope construction. After a simple reduction to the case of cyclic groups, one builds $X$ so that $H_{d - 1}(X)_T = \Z/m\Z$ via a repeated squares presentations. Suppose that $m = 2^{n_0} + \cdots + 2^{n_k}$, then $\Z/m\Z$ is given by the abelian group presentation $\langle a_1, a_2, ..., a_{n_k} \mid 2a_1 = a_2, 2a_2 = a_3, ..., 2a_{n_k - 1} = a_{n_k}, a_{n_0} + a_{n_1} + \cdots + a_{n_k} = 0 \rangle.$ The ``telescope" portion of the construction gives the relators $2a_i = a_{i + 1}$, with the ``sphere" (or more precisely punctured sphere) portion of the construction giving the last relator. Thus for powers of 2, we only need the telescope portion making the initial construction easier to describe.

For the final construction $(X, c)$ is given as a quotient of $X$ by the following definition.
\begin{definition}[Definition 2.2 from \cite{Newman}]
If $X$ is a simplicial complex with a coloring $c$ of $V(X)$ we define the \emph{pattern} of a face to be the multiset of colors on its vertices. If $c$ is a proper coloring, in the sense that no two vertices connected by an edge receive the same color, we define the \emph{pattern complex} $(X, c)$ to be the simplicial complex on the set of colors of $c$ so that a subset $S$ of the colors of $c$ is a face of $(X, c)$ if and only if there is a face of $X$ with $S$ as its pattern.
\end{definition}
Now, if $X$ is $d$-dimensional and $c$ gives a unique pattern to every $(d - 1)$ dimensional face of $X$, then the torsion part of $H_{d - 1}(X)$ is the same as that of $H_{d - 1}((X, c))$. This appears as Lemma 2.3 in \cite{Newman}. This is why the second step, accomplished via the probabilistic method, is to prove that such a coloring exists.

\section{Proof of Theorem \ref{thm:2torsionTheorem}}
For \cite{Newman}, it sufficed to have constants in the statements and proofs that depended on $d$, without actually computing those constants. In the present situation, we need to understand how these constants depend on $d$. Since we have a different focus here we don't worry about obtaining a constant $C_d$ that is linear in $d$ in general, rather opting for the special case of abelian 2-groups. Moreover for 2-groups the deterministic part of the construction is the same as in \cite{Newman}, the probabilistic coloring part still uses the Lov\'{a}sz Local Lemma but in a different way than in \cite{Newman}. Thus we begin with the probabilistic step. 


To state the coloring lemma we reintroduce some notation from \cite{Newman}. If $X$ is a simplicial complex, we denote by $\skel_i(X)$ the set of $i$-dimensional faces of $X$ and for nonnegative integers $i$ and $j$, we denote by $\Delta_{i, j}(X)$ the maximum of degree of an $i$-dimensional face in $j$-dimensional faces. That is 
\[\Delta_{i, j}(X) = \max_{\sigma \in \skel_i(X)} |\{\tau \in \skel_j(X) \mid \sigma \subseteq \tau \}|.\]
\begin{lemma}\label{newcolor}
Let $X$ be a $d$-dimensional simplicial complex, for $d \geq 2$, on $n$ vertices with $\Delta_{0, d-1}(X) \leq L$ for some $L$. If there exists a proper coloring $c$ of the vertices of $X$ having at most $K$ colors so that no pair of intersecting $(d - 1)$-dimensional faces receive the same pattern by $c$ then there exists a second proper coloring $c'$ of $V(X)$ having at most $K(3eL^2n)^{1/d}$ colors so that no pair of $(d-1)$-dimensional faces of $X$ receive the same pattern by $c'$.
\end{lemma}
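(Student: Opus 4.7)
The plan is to take $c'$ to be a random refinement of $c$. Setting $M := \lceil(3eL^2n)^{1/d}\rceil$, I would assign to each vertex $v \in V(X)$ an independent uniform colour $r(v) \in \{1, \ldots, M\}$ and define $c'(v) := (c(v), r(v))$. Then $c'$ is automatically proper (because its first coordinate $c$ is), and it uses at most $KM \leq K(3eL^2n)^{1/d}$ colours. Two $(d-1)$-faces have equal pattern under $c'$ exactly when they have equal $c$-pattern and equal $r$-pattern; by the hypothesis on $c$ the only pairs to worry about are \emph{disjoint} $(d-1)$-faces $\sigma, \tau$ that happen to share a $c$-pattern. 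For each such pair let $B_{\sigma,\tau}$ denote the event that they also share an $r$-pattern, and the goal is to show by the Lov\'asz Local Lemma that some outcome avoids all $B_{\sigma,\tau}$.

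The crucial probability estimate uses properness of $c$. Since the $d$ vertices of a $(d-1)$-face form a clique in the 1-skeleton of $X$, $c$ restricted to $\sigma$ is injective, so the shared $c$-pattern is an honest $d$-element set and canonically identifies each vertex of $\sigma$ with the unique vertex of $\tau$ of the same $c$-colour, via some bijection $\phi : \sigma \to \tau$. The $r$-patterns of $\sigma$ and $\tau$ then agree if and only if $r(v) = r(\phi(v))$ for every $v \in \sigma$; these are $d$ independent equalities (since $\sigma \cap \tau = \emptyset$) each of probability $1/M$, so $\Pr[B_{\sigma,\tau}] \leq 1/M^d$. I expect this to be the main obstacle: without exploiting properness one only gets the naive multiset bound $d!/M^d$, which would inflate the constant by a factor of $d!^{1/d} \sim d/e$ and defeat the $d$-free statement of the lemma.

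For the dependency graph, $B_{\sigma,\tau}$ is determined by $r$-values on $\sigma \cup \tau$, so it is mutually independent of every $B_{\sigma',\tau'}$ with $(\sigma' \cup \tau') \cap (\sigma \cup \tau) = \emptyset$. Each of the $2d$ vertices of $\sigma \cup \tau$ lies in at most $L$ faces of $\skel_{d-1}(X)$, and each such face pairs with at most $|\skel_{d-1}(X)| - 1 \leq Ln/d$ partners to form another potentially bad pair, giving dependency degree at most $2dL \cdot (Ln/d) = 2L^2n$. The symmetric Lov\'asz Local Lemma then requires $e \cdot (1/M^d) \cdot (2L^2 n + 1) \leq 1$, equivalently $M^d \geq e(2L^2 n + 1)$, which is implied by $M^d \geq 3eL^2 n$ whenever $L^2 n \geq 1$. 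Hence the random $c'$ succeeds with positive probability and the lemma follows.
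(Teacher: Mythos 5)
Your proof is correct and follows essentially the same route as the paper: a random product refinement $c'=(c,r)$, reduction to disjoint pairs sharing a $c$-pattern, the key use of properness to get the canonical bijection and the clean $1/M^d$ bound (rather than the lossy $d!/M^d$), the same $2L^2n$ dependency count, and the same symmetric Lov\'asz Local Lemma application. The only cosmetic difference is that you explicitly take a ceiling for the number of colors, which the paper suppresses by convention.
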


As in \cite{Newman}, we will make use of the Lov\'{a}sz Local Lemma which we state here as it appears in the symmetric case in \cite{AS}: 
\begin{lovaszlemma}[\cite{EL}]
Let $A_1, A_2, ..., A_n$ be events in an arbitrary probability space. Suppose that each event $A_i$ is mutually independent of all the other events $A_j$ but at most $t$, and that $\Pr[A_i] \leq p$ for all $1 \leq i \leq n$. If $ep(t+1) \leq 1$ then $\Pr[\bigwedge_{i = 1}^n \overline{A_i}] > 0$
\end{lovaszlemma}

\begin{proof}[Proof of Lemma \ref{newcolor}]
Let $X$ be a simplicial complex colored by $c$ as described in the statement. For $c'$, which handles the disjoint $(d-1)$-dimensional faces, we will take the product of $c$ with a second coloring $c_2$ found using the Lov\'{a}sz Local Lemma by considering the random process of coloring every vertex independently from a set of $(3eL^2n)^{1/d}$ colors. Moreover, we will also use the fact that we have colored the vertices properly by $c$. For a vertex-disjoint pair of $(d-1)$-dimensional faces $(\sigma, \tau)$, let $A_{\sigma, \tau}$ be the event that $\sigma$ and $\tau$ receive the same pattern by the coloring $c' = (c, c_2)$. If $\sigma$ and $\tau$ receive different patterns by $c$, then $c_2$ is irrelevant to guaranteeing that they do not receive the same pattern by $c'$. Thus we only have to consider vertex-disjoint pairs $(\sigma, \tau)$ where $\sigma$ and $\tau$ receive the same pattern by $c$. Since $c$ is a proper coloring of $V(X^{(1)})$ we have that the identical colorings of $\sigma$ and $\tau$ induce a bijection $\varphi: V(\sigma) \rightarrow V(\tau)$ by sending each vertex in $\sigma$ to the unique vertex in $\tau$ that received the same coloring under $c$. Thus $\sigma$, $\tau$ receive the same pattern by $c'$ if and only if for every $v \in \sigma$, $c_2(v) = c_2(\varphi(v))$. For each $v$, this occurs with probability $(3eL^2n)^{-1/d}$. Thus for every vertex-disjoint pair $(\sigma, \tau)$ we have
$$\Pr(A_{\sigma, \tau}) \leq \dfrac{1}{((3eL^2n)^{1/d})^d} = \frac{1}{3eL^2n}$$
Now, we need to bound the number of $(\sigma', \tau')$ which have $A_{\sigma, \tau}$ not independent from $A_{\sigma', \tau'}$. This is bounded above by $2dL(Ln/d) = 2L^2 n$ (pick one of the at most $dL$ faces $\sigma'$ which share a vertex with $\sigma$, then pick any of the at-most $Ln/d$ faces for $\tau$ and  multiply by two since we may reverse the role of $\sigma$ and $\tau$). Now, we have 
$$e \frac{1}{3eL^2n} (2L^2 n + 1) \leq 1,$$
so the Lov\'{a}sz Local Lemma applies. Thus there is a coloring $c_2$, so that the resulting product $c'$ of $c$ and $c_2$ has that no two $(d-1)$-dimensional faces receive the same pattern.
\end{proof}

Now we just need to find $K$, $L$, and $n$ from a colored initial construction in order to apply Lemma \ref{newcolor} to prove Theorem \ref{thm:2torsionTheorem}. We will prove the following theorem regarding the initial construction restricting to the case of 2-groups. 

\begin{lemma}\label{initial}
Let $d \geq 2$ be an integer. Then for every abelian 2-group $G$ there exists a $d$-dimensional simplicial complex $X = X(G)$ with $H_{d - 1}(X)_T \cong G$ so that the following bounds hold:
\[|V(X)| \leq 2(d + 1) \log_2 |G| \]
and 
\[\Delta_{0, (d - 1)}(X) \leq 2((2 + o_d(1))^dd)  \]
And moreover there exists a proper coloring of $V(X)$ using at most $3(d + 1)$ colors so that no pair of intersecting $(d - 1)$-dimensional faces receive the same pattern.
\end{lemma}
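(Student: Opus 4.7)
My plan is to carry out the sphere-and-telescope construction from \cite{Newman} in the special case of abelian 2-groups (where only the telescope portion is needed) and to verify the three stated bounds with explicit constants.

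First, I would reduce to cyclic 2-groups by decomposing $G \cong \bigoplus_{i=1}^{s} \Z/2^{m_i}\Z$ and building $X(G)$ as a one-point union of complexes $X(\Z/2^{m_i}\Z)$. Since $H_{d-1}$ of a wedge splits as a direct sum, and the vertex bound, degree bound, and coloring requirements all descend through a wedge (with the three palettes reused across summands after consistent identification at the wedge vertex), it suffices to handle a single cyclic factor.

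For a cyclic factor $\Z/2^n\Z$, I would implement the telescope realizing the presentation $\langle a_1, \ldots, a_n \mid 2a_1 = a_2, \ldots, 2a_{n-1} = a_n, a_n = 0\rangle$. Each generator $a_i$ is realized as the fundamental $(d-1)$-cycle of a small triangulated $(d-1)$-sphere $S_i$ (say $\partial \Delta^d$, on $d+1$ vertices), and each relation $2a_i = a_{i+1}$ is realized by a ``doubling gadget'' $D_i$, a $d$-dimensional subcomplex on roughly $2(d+1)$ vertices whose $d$-chain boundary is $2[S_i] - [S_{i+1}]$; the final relation $a_n = 0$ is handled by coning off $S_n$. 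A direct count — each gadget contributing at most $2(d+1)$ new vertices after accounting for the sharing of $S_i$ between consecutive gadgets — gives $|V(X)|\leq 2(d+1)\log_2|G|$. For the face-degree bound I use that any vertex lies in at most two consecutive gadgets, and that inside a single gadget the number of $(d-1)$-faces through a given vertex is controlled by the local simplicial structure of the triangulated sphere and connecting $d$-ball; this yields $\Delta_{0,d-1}(X) \leq 2(2+o_d(1))^d d$, with the exponential factor arising from the combinatorial count of $(d-1)$-faces incident to a vertex inside the gadget.

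The main obstacle is the coloring. My plan is to use three pairwise disjoint palettes $P_0, P_1, P_2$, each of size $d+1$, and to assign the vertices of the telescope cyclically: the sphere $S_i$ is colored from $P_{i \bmod 3}$ and the interior vertices of each gadget $D_i$ are colored from the unique palette not used by either bounding sphere. Properness is immediate because the three layers of each gadget ($S_i$, interior, $S_{i+1}$) draw from pairwise disjoint palettes, while within each layer the gadget is designed so that $d+1$ colors suffice. For the pattern-separation condition on intersecting $(d-1)$-faces, I would split into cases according to whether the two intersecting faces lie in a common gadget, straddle a shared sphere $S_i$, or share an interior vertex; in each case the disjointness of palettes forces non-shared vertices into distinct color classes, which prevents the patterns from coinciding. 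Carrying out this case analysis carefully — in particular ruling out accidental coincidences among faces that meet only at a single vertex inside a gadget — is the principal combinatorial step of the argument.
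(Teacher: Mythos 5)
Your overall plan matches the paper's: reduce to cyclic 2-groups, build a telescope of ``doubling gadgets'' realizing the presentation $\langle a_1,\dots,a_t \mid 2a_i = a_{i+1}, a_t = 0\rangle$, and 3-color the sphere layers cyclically with disjoint palettes of size $d+1$. The reduction via wedge sum (versus the paper's disjoint union) works equally well since $d-1 \geq 1$, though it adds a small bookkeeping burden at the wedge point that the disjoint union avoids.

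The gap is that you treat the doubling gadget $D_i$ as a black box: you assert that there is a $d$-dimensional complex on roughly $2(d+1)$ vertices realizing $2[S_i] - [S_{i+1}]$ as a $d$-chain boundary, and then invoke its ``local simplicial structure'' to get $\Delta_{0,d-1}(X) \leq 2(2+o_d(1))^d d$. Both the existence of such a gadget and that specific exponential degree bound are the substance of the lemma and cannot be waved through. The paper (following \cite{Newman}) pins this down by building $P(d)$ explicitly: start from a $6$-vertex triangulation of the punctured real projective plane with $9$ triangles (which realizes $2a = b$ in $H_1$ on $2\cdot 3 = 6$ vertices), and for $d \geq 3$ take the suspension of $P(d-1)$ and add two new top faces coning the suspension points over the opposite bounding $(d-1)$-spheres. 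This yields $|V(P(d))| = 2(d+1)$ exactly, and the top-face count obeys $f_d(P(d+1)) = 2f_d(P(d)) + 2$; that recursion, together with ``each vertex lies in at most two copies of $P(d)$,'' is precisely where the $(2+o_d(1))^d$ comes from. Without this (or some other explicit gadget with verified face counts), the degree bound is unsubstantiated.

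A smaller issue: you speak of ``interior vertices'' of $D_i$ getting a third palette, but in the construction $P(d)$ there are none — all $2(d+1)$ vertices lie on the two bounding $(d-1)$-spheres. The reason three palettes are genuinely needed is not interiors but that two intersecting $(d-1)$-faces can straddle $S_{i-1}\cup S_i$ and $S_i \cup S_{i+1}$ respectively, so $S_{i-1}$ and $S_{i+1}$ must draw from distinct palettes; the cyclic assignment $S_i \mapsto P_{i \bmod 3}$ achieves this. Your case analysis should be reorganized around that observation rather than around interior vertices.
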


Before we prove this we will use it to prove Theorem \ref{thm:2torsionTheorem}.
\begin{proof}[Proof of Theorem \ref{thm:2torsionTheorem}]
Fix $d \geq 2$ and $G$ an abelian 2-group. We apply Lemma \ref{newcolor} to the initial construction $X$ for $G$ given by Lemma \ref{initial} with $K = 3(d + 1)$, $L = 2((2 + o_d(1))^dd)$, and $n \leq 2(d + 1) \log_2|G|$. Thus there exists a coloring of $X$ satisfying the conclusion of Lemma \ref{newcolor} having at most $$(3d + 3)(3e(2(2 + o_d(1))^d d)^2)^{1/d}(\log_2|G|)^{1/d}$$ colors. As $d$ tends to infinity, this approaches $3d(4 \log_2^{1/d}|G|) = 12d \log_2^{1/d}|G|$. Thus for $d$ large enough, the final construction for any $G$ with $|G| = 2^{2^d}$ will have at most $25d$ vertices.
\end{proof}

The full proof of Lemma \ref{initial} is omitted here as the construction is identical to the construction for the case of abelian 2-groups in \cite{Newman}. Instead we only describe the construction and its coloring. In doing so we prove every part of Lemma \ref{initial} except that the homology group is correct, but this should be believable from the description and the full details to verify the homology group are given in Section 3 of \cite{Newman}.

The first observation to prove Lemma \ref{initial} is that we may restrict to the case that $G$ is cyclic. Indeed, suppose that $G = \Z/(2^{e_1})\Z \oplus \Z/(2^{e_2})\Z \oplus \cdots \oplus \Z / (2^{e_l})\Z$ and we have Lemma \ref{initial} for cyclic groups. Then for each $e_i$ there exists $X_i$, $d$-dimensional colored by colors $\{1, ..., 3(d + 1)\}$ so that no intersecting pair of $(d - 1)$-dimensional faces receive the same pattern. And for each $i$, $H_{d - 1}(X_i) = \Z/(2^{e_i})\Z$, $\Delta_{0, (d - 1)}(X_i) \leq 2((2 + o_d(1))^dd)$ and $X_i$ has at most $2(d + 1)e_i$ vertices. Now taking $X$ to be the disjoint union of the $X_i$ (with the vertex colorings remaining unchanged) gives us Lemma \ref{initial} for $G$. 

The construction for cyclic 2-groups is now given by first defining a family of building blocks $P(d)$ for $d \geq 2$. Each $P(d)$ is a simplicial complex on $2(d + 1)$ vertices with $H_{d - 1}(P(d)) = \langle a, b \mid 2a = b \rangle$, with the homology class of $a$ being represented by the boundary of a $d$-simplex and the homology class $b$ being represented by the boundary of another $d$-simplex disjoint from the first. Note that the $2(d + 1)$ vertices in these two simplex boundaries constitute all the vertices of $P(d)$. Now for given $d, t \in \N$ with $d \geq 2$, take $t$ copies of $P(d)$, denoted $P_1, ..., P_t$ with $H_{d - 1}(P_i) = \langle a_i, b_i \mid 2a_i = b_i \rangle$. Next for $i \leq t - 1$, glue $P_i$ to $P_{i + 1}$ by identifying the simplex boundary representing $b_i$ to the simplex boundary representing $a_{i+1}$. Finally fill in the $d$-simplex boundary representing $b_t$ with a $d$-simplex and we have a complex $X = X(d, t)$ with $H_{d - 1}(X) = \langle a_1, ..., a_t, b_t \mid 2a_1 = a_2, 2a_2 = a_3, ..., 2a_{t - 1} = a_t, 2a_t = b_t, b_t = 0 \rangle = \langle a_1 \mid 2^t a_1 = 0 \rangle \cong \Z/2^t\Z$.

Now clearly the number of vertices in $X(d, t)$ is $(d + 1)(t + 1) \leq (d + 1) (\log_2 |\Z/2^t\Z| + 1) \leq 2(d + 1) \log_2|\Z/2^t\Z|$. Moreover as each vertex is contained in at most $2$ copies of $P(d)$, we have that $\Delta_{0, {d - 1}}(X(d, t))$ is at most twice the number of $(d - 1)$-dimensional faces of $P(d)$. We should therefore describe how $P(d)$ is constructed.

The complex $P(d)$ is constructed by induction on the dimension. For $P(2)$, we start with the triangulation of the real projective plane with 6 vertices, 15 edges, and 10 faces given by antipodal identification of the icosahedron, and then delete a single face. Explicitly, $P(2)$ can be described as the pure simplicial complex on vertex set $\{1, 2, 3, 4, 5, 6\}$ with 2-dimensional faces 
\[[1, 2, 6], [1, 3, 6], [2, 4, 6], [3, 5, 6], [2, 3, 4], [2, 3, 5], [1, 3, 4], [1, 4, 5], [1, 2, 5].\]

For the induction with $d \geq 3$ we observe that $P(d - 1)$ has $2d$ vertices which define two disjoint $(d - 1)$-simplex boundaries each representing a homology class. Let $v_1, ..., v_d$ and $u_1, ..., u_d$ be the vertices defining these two simplex boundaries. Now construct $P(d)$ by taking the suspension of $P(d - 1)$ with suspension points $v'$ and $u'$, then add two new $d$-dimensional faces $[u', v_1, ..., v_d]$ and $[v', u_1, ..., u_d]$ (as well as the $(d-1)$-dimensional faces $[v_1, ..., v_d]$ and $[u_1, ..., u_d]$). In $P(d)$ the two simplex boundaries representing the relevant homology class are the simplex boundary on $v', v_1, v_2, ..., v_d$ and the simplex boundary on $u', u_1, u_2, ..., u_d$. \footnote{When formally defining $P(d)$ in \cite{Newman}, it is necessary to be careful with how the simplicies are oriented so that things may be properly attached to give the right torsion group. If you want a cyclic group of order $2^0 + 2^3 + 2^6 = 73$, you don't want to attach something ``backwards" and end up with $2^0 - 2^3 + 2^6 = 57$ instead, for instance. We exclude these details here.}

It is clear that $P(d + 1)$ will have $2d + 2$ vertices and will have, if we let $f_i$ denote the usual $i$th entry of the $f$-vector of a complex, $2(f_d(P(d))) + 2$ top-dimensional faces with $P(2)$ having 9 2-dimesional faces. Thus we obtain the claimed bound on the number of $(d - 1)$-dimensional faces. 

The last piece to check is that we may color $X = X(d, t)$ with $3(d + 1)$ colors so that no pair of intersecting $(d - 1)$-dimensional faces receives the same pattern. We have that $X$ is given by $t$ copies of $P_i$ so its vertices come from $t + 1$, $d$-simplex boundaries. Denote these simplex boundaries $S_0, S_1, ..., S_t$ and properly color the vertices of $S_i$ arbitrarily with colors $\{1, ..., d + 1\}$ for $i \equiv 0 \mod 3$, with colors $\{d + 2, ..., 2d + 2\}$ for $i \equiv 1 \mod 3$, and with colors $\{2d + 3, ..., 3d + 3\}$ for $i \equiv 2 \mod 3$. Now any pair of vertices at distance (in the 1-skeleton of $X$) at most 2 from one another receive different colors so there can be no intersecting $(d - 1)$-dimensional faces that receive the same pattern. This now completes the sketch of the argument for Lemma \ref{initial}. With this we have completed the proof of Theorem \ref{thm:main}.

\section{Remarks}
For the proof of the main result, we restricted to the case of constructing complexes with prescribed abelian 2-groups in homology. This was to make the argument simpler as it relates to the main focus here. It is possible to prove that for any $d \geq 2$ and any abelian group $G$, $T_d(G) \leq Cd\sqrt[d]{\log |G|}$ where $C$ is an absolute constant. Chapter 4 of the author's Ph.D. thesis \cite{thesis} gives such an argument with $C = 50$. However for $|G|$ not a power of 2 the deterministic construction is more complicated as is the first round coloring. Such an argument could be worth going though here if it could be pushed further to give that for any $\epsilon > 0$ and $n$ large enough $\log_2 \log_2 h(n) \geq (1 - \epsilon) n$, but this does not seem to be the case.

In the other direction, the connection to enumerating homotopy types does give the following proposition:
\begin{proposition}
Let $C < \frac{\log_2(e)}{e}$, then there exists $d$ arbitrarily large and a finite abelian group $G$ so that $T_d(G) > Cd\sqrt[d]{\log_2|G|}$. 
\end{proposition}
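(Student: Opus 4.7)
The plan is to argue by contradiction. Suppose for some $C < \log_2(e)/e$ and some threshold $N$ we have $T_d(G) \leq Cd\sqrt[d]{\log_2|G|}$ for every $d \geq N$ and every finite abelian group $G$; the goal is to produce a lower bound on $h(n)$ that violates the Kleitman--Markowsky upper bound $\log_2 h(n) \leq (1+o(1))\binom{n}{\lfloor n/2 \rfloor}$ cited in the introduction. First I would specialize the hypothesis to abelian $2$-groups, which provide the largest number of isomorphism classes per $\log_2 |G|$. As recalled in the proof of Theorem~\ref{thm:main}, the number of abelian $2$-groups of order $2^M$ is $p(M)$, and by Hardy--Ramanujan $\log_2 p(M) \sim \alpha \sqrt{M}$ for $\alpha = \pi\sqrt{2/3}\log_2(e) > 1$. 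Under the hypothesis, each such $G$ is realizable as $H_{d-1}(X_G)_T$ for some $d$-dimensional complex $X_G$ on at most $CdM^{1/d}$ vertices; since distinct torsion groups force distinct homotopy types, $h(\lceil CdM^{1/d}\rceil) \geq p(M)$.

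Next I would optimize. Given $n$, take $M = \lfloor (n/(Cd))^d \rfloor$ (the largest allowed) and maximize $\sqrt{M}$, equivalently $(d/2)\log(n/(Cd))$, over $d$. Elementary calculus puts the optimum at $d = n/(eC)$, where $n/(Cd) = e$, $M \approx e^d$, and hence $\sqrt{M} \approx \exp(n/(2eC))$. This produces
\[ \log_2 h(n) \geq \alpha \exp\!\bigl(n/(2eC)\bigr)\bigl(1-o(1)\bigr). \]
Taking $\log_2$ of both sides and comparing to $\log_2\log_2 h(n) \leq n(1+o(1))$ from Kleitman--Markowsky gives the leading-order inequality $\log_2(e)/(2eC) \leq 1$.

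The main obstacle will be matching the precise constant. The argument just sketched yields only $C \geq \log_2(e)/(2e)$, a factor of two weaker than the $\log_2(e)/e$ claimed. Reaching the stronger constant should come from replacing the Kleitman--Markowsky bound on $h(n)$ with a finer upper bound on $\log_2 h(n,d)$ at the specific dimension $d = n/(eC)$ arising at the optimum (which exceeds $n/2$ in this regime), exploiting that only antichains supported on subsets of $[n]$ of size at most $d+1$ contribute, rather than arbitrary antichains in the Boolean lattice. Getting this refinement correct will be the key step.
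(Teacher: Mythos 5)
Your overall strategy matches the paper's: argue by contradiction, count distinct torsion groups realizable on $n$ vertices, and contradict a doubly-exponential upper bound on $h(n)$. But there is a genuine gap, and you have correctly identified its symptom (the factor of $2$) without identifying its cause. The problem is the choice of family of groups, not the upper bound on $h(n)$.

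You count abelian $2$-groups of a \emph{fixed} order $2^M$, of which there are $p(M) = \exp\bigl(\Theta(\sqrt{M})\bigr)$. That square root is exactly where the factor of $2$ is lost, since it halves $\log_2 \log_2 h(n)$ on the lower-bound side. The paper instead counts all cyclic groups of order at most $m$, of which there are $\Theta(m) = \exp\bigl(\Theta(\log m)\bigr)$: taking $m = e^{e^d}$, so $\log_2 m = e^d \log_2 e$ and $T_d(\Z/k) \le Cde(\log_2 e)^{1/d} \le C_1 de =: n$ for every $k \le m$ once $d$ is large, one gets $h(n) \ge e^{e^d} = e^{e^{n/(C_1 e)}}$, and comparing with $2^{2^n}$ yields $C_1 \ge \log_2(e)/e$ with no loss. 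In your notation this replaces $\log_2 h(n) \gtrsim \sqrt{M}$ by $\log_2 h(n) \gtrsim M$, removing the square root.

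Your proposed fix does not close the gap. At the optimal $d = n/(eC)$ you have $d/n = 1/(eC) > 1/\log_2(e) = \ln 2 > 1/2$, so the truncated Boolean lattice $\{S \subseteq [n] : |S| \le d+1\}$ still contains the full middle layer $\binom{[n]}{\lfloor n/2\rfloor}$; the number of antichains therein is still $2^{\Theta(\binom{n}{\lfloor n/2\rfloor})}$, so $\log_2\log_2 h(n,d) = n\bigl(1 - o(1)\bigr)$ and there is no constant-factor improvement available there. Nor does choosing a different $d$ to trade against a genuinely smaller $nH\bigl((d+1)/n\bigr)$ bound help: one can check that $\tfrac{x}{2}\log_2\bigl(1/(Cx)\bigr) > H(\min(x,1/2))$ has no solution $x \in (0,1]$ when $C$ is near $\log_2(e)/e$. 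The fix must come from the group-counting side, and that is what switching from $2$-groups of a fixed order to all cyclic groups of bounded order accomplishes.
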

\begin{proof}
Suppose that for every $d$ large enough and every finite abelian group $G$ that $T_d(G) \leq Cd \sqrt[d]{\log_2 |G|}$ with $C < \log_2(e)/e$. Then every cyclic group of order $e^{e^d}$ is realizable as the torsion part of the $(d - 1)$st homology group for a simplicial complex on $Cde\sqrt[d]{\log_2(e)}$ vertices. For $d$ large enough this is at most $C_1 d e$ for some $C_1$ satisfying $C < C_1 < \frac{\log_2(e)}{e}$. Thus setting $n = C_1 d e$ we have at least $e^{e^{n/(C_1e)}}$ homotopy types for simplicial complexes on $n$ vertices, but this exceeds the trivial upper bound of $2^{2^n}$.
\end{proof}
Based on this proposition, we end with the following conjecture which would imply that $\frac{\log_2 \log_2 h(n)}{n} \rightarrow 1$ as $n \rightarrow \infty$.
\begin{conjecture}
For $d \geq 2$ and $G$ an arbitrary finite abelian group
$$T_d(G) \leq (1 + o_d(1)) \frac{d}{\log(2) e} \sqrt[d]{\log|G|}$$
\end{conjecture}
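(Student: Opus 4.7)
The plan is to refine each step of the Newman construction while carefully tracking all dimension-dependent constants, aiming to match the lower bound established in the proposition above. Lemma \ref{newcolor} gives a final bound of the form $K \cdot (3eL^2 n)^{1/d}$, where $K$ is the number of colors in the first-round ``intersection-only'' coloring, $L$ is the maximum degree of a vertex in $(d-1)$-faces of the initial construction, and $n$ is its number of vertices. Since $(3eL^2)^{1/d} \to 1$ as $d \to \infty$ whenever $L$ grows only polynomially in $d$, the conjecture reduces to arranging $K \cdot n^{1/d} \sim \frac{d}{e \log 2} (\log|G|)^{1/d}$.

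First I would upgrade the initial construction. For 2-groups, Lemma \ref{initial} costs $2(d+1)$ vertices per bit of torsion; the target demands $(1+o_d(1))$ vertices per bit. A natural attempt is to glue consecutive doubling blocks $P(d)$ along entire $(d-1)$-faces rather than only along the vertex sets of the two shared simplex boundaries, so that each new block costs roughly $d+1$ fresh vertices instead of $2(d+1)$, while the telescope still computes to $\Z/2^t\Z$. For general abelian $G$ one would in parallel streamline the sphere relator from \cite{thesis}; making both pieces efficient simultaneously is substantial but plausible. Next, for the coloring, since $K \geq d+1$ for any proper coloring of a complex containing a $d$-simplex, and the conjecture's target $\frac{d}{e \log 2} \approx 0.53\,d$ sits strictly below this floor, the two steps cannot remain as cleanly separated as in Lemma \ref{newcolor}. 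The natural fix is to absorb part of the reduction into the deterministic step: identify vertices between the blocks $P_i$ so that the whole initial complex globally uses fewer than $d+1$ colors, even though each individual $d$-simplex boundary remains locally properly colored; the Lov\'asz Local Lemma is then asked only to resolve pattern-collisions among disjoint $(d-1)$-faces of the already-quotiented complex.

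The main obstacle is precisely this chromatic floor $K \geq d+1$. To cross it one must verify that aggressive vertex identifications between blocks do not destroy the prescribed torsion in $H_{d-1}$, which is delicate because merging non-adjacent vertices can introduce unwanted boundary relations that kill a cyclic generator of $\Z/2^t\Z$. A more ambitious alternative is to engineer a construction whose generating $(d-1)$-cycles are not $d$-simplex boundaries at all, for instance by using minimal triangulations of $(d-1)$-dimensional pseudomanifolds with fewer than $d+1$ vertices per independent $(d-1)$-cycle in an amortized sense, thereby genuinely bypassing the barrier rather than circumventing it by identifications. I expect this homological bookkeeping, balanced against the vertex-efficiency and coloring constraints, to be the hardest part of any full proof of the conjecture.
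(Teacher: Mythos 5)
This statement is a conjecture; the paper offers no proof of it, so there is nothing to compare your attempt against. What you have written is a roadmap rather than a proof, and you are candid about that, so the right thing to evaluate is whether the roadmap is coherent and whether the obstacles you identify are real.

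Your central observation is correct and worth stressing: the two-step scheme of Lemma~\ref{newcolor} gives a final vertex count of at most $K(3eL^2n)^{1/d}$, and since the initial complex $X$ is $d$-dimensional its $1$-skeleton contains a copy of $K_{d+1}$, forcing any proper coloring to use $K \geq d+1$ colors. With $n \gtrsim \log|G|/\mathrm{poly}(d)$ (which is forced, since the torsion of a bounded-degree complex on $n$ vertices is at most exponential in $n\cdot\mathrm{poly}(d)$) and $(3eL^2)^{1/d}\to 1$ for $L=\mathrm{poly}(d)$, the method cannot produce fewer than $(1-o_d(1))(d+1)(\log|G|)^{1/d}$ vertices. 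The conjectured constant $\tfrac{1}{e\log 2}\approx 0.53$ sits strictly below $1$, so the Lemma~\ref{newcolor} machinery, however its constants are tuned, cannot close the gap. This is a genuine structural barrier and you are right to flag it.

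However, your first proposed workaround is internally inconsistent. You suggest identifying vertices ``so that the whole initial complex globally uses fewer than $d+1$ colors, even though each individual $d$-simplex boundary remains locally properly colored.'' A properly colored $\partial\Delta^d$ has $d+1$ pairwise-adjacent vertices and hence uses exactly $d+1$ distinct colors; no identification of vertices \emph{outside} that boundary can lower the global count below $d+1$, and any identification \emph{inside} it destroys the boundary (and, as you note yourself, imperils the homology). The pattern-complex formalism is irrevocably tied to proper colorings, so any route to the conjecture through it inherits the $K\geq d+1$ floor. Your second idea --- building generating $(d-1)$-cycles that are not simplex boundaries --- does not escape this either so long as the final quotient is a $d$-dimensional pattern complex, since the $d$-faces themselves still force $\geq d+1$ colors. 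The honest conclusion of your own analysis is that a proof of the conjecture must abandon the bounded-degree-plus-LLL-quotient paradigm altogether, for instance by constructing dense complexes that achieve near-Hadamard torsion directly (in the spirit of Kalai's enumeration of $\mathbb{Q}$-acyclic complexes) while retaining enough control to hit every prescribed $G$; tracking both properties simultaneously is the real open problem, and your proposal does not yet contain an idea that addresses it.
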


\bibliography{ResearchBibliography}
\bibliographystyle{amsplain}
\end{document}